\documentclass[final,3p,10pt,authoryear]{elsarticle}

\makeatletter
\def\ps@pprintTitle{%
 \let\@oddhead\@empty
 \let\@evenhead\@empty
 \def\@oddfoot{}%
 \let\@evenfoot\@oddfoot}
\makeatother

\usepackage{amssymb,amsthm,mathtools}
\usepackage[colorlinks]{hyperref}
\usepackage{natbib}

\DeclareRobustCommand{\brkbinom}{\genfrac\{\}{0pt}{}}
\usepackage{xcolor,dsfont,geometry}
\geometry{top=0.95in,bottom=1.05in,left=1in,right=1in}

\newtheorem{Theorem}{Theorem}
\newtheorem{Lemma}{Lemma}

\makeatletter \@addtoreset{equation}{section} \makeatother

\newcommand{\N}{\mathbb{N}}
\newcommand{\R}{\mathbb{R}}

\newcommand{\EE}{\mathbb{E}}

\newcommand{\bb}[1]{\boldsymbol{#1}}

\newcommand{\leqdef}{\vcentcolon=}

\newcommand{\blue}{\color{blue}}
\newcommand{\ind}{\mathds{1}}

\allowdisplaybreaks

\begin{document}

\begin{frontmatter}

    \title{General formulas for the central and non-central moments of the multinomial distribution}%

    \author[a1]{Fr\'ed\'eric Ouimet\texorpdfstring{\corref{cor1}\fnref{fn1}}{)}}%

    \address[a1]{California Institute of Technology, Pasadena, USA.}%

    \cortext[cor1]{Corresponding author}%
    \ead{ouimetfr@caltech.edu}%

    \begin{abstract}
        We present the first general formulas for the central and non-central moments of the multinomial distribution, using a combinatorial argument and the factorial moments previously obtained in \texorpdfstring{\cite{MR143299}}{Mosimann (1962)}. We use the formulas to give explicit expressions for all the non-central moments up to order~8 and all the central moments up to order~4. These results expand significantly on those in \texorpdfstring{\cite{Newcomer2008phd,Newcomer_et_al_2008_tech_report}}{Newcomer (2008) and Newcomer et al.\ (2008)}, where the non-central moments were calculated up to order~4.
    \end{abstract}

    \begin{keyword}
        multinomial distribution \sep higher moments \sep central moments \sep non-central moments
        \MSC[2010]{Primary : 62E15 Secondary : 60E05}
    \end{keyword}

\end{frontmatter}

\section{The multinomial distribution}\label{sec:intro}

    For any $d\in \N$, the $d$-dimensional (unit) simplex is defined by $\mathcal{S} \leqdef \big\{\bb{x}\in [0,1]^d : \sum_{i=1}^d x_i \leq 1\big\}$, and the probability mass function $\bb{k}\mapsto P_{\bb{k},m}(\bb{x})$ for $\bb{\xi} \leqdef (\xi_1,\xi_2,\dots,\xi_d) \sim \mathrm{Multinomial}\hspace{0.2mm}(m,\bb{x})$ is defined by
    \begin{equation}\label{eq:multinomial.probability}
        P_{\bb{k},m}(\bb{x}) \leqdef \frac{m!}{(m - \sum_{i=1}^d k_i)! \prod_{i=1}^d k_i!} \cdot (1 - \sum_{i=1}^d x_i)^{m - \sum_{i=1}^d k_i} \prod_{i=1}^d x_i^{k_i}, \quad \bb{k}\in \N_0^d \cap m\mathcal{S},
    \end{equation}
    where $m\in \N$ and $\bb{x}\in \mathcal{S}$.
    If $x_{d+1}\leqdef 1-\sum_{i=1}^d x_i$, then \eqref{eq:multinomial.probability} is just a reparametrization of $(\bb{\xi},1-\sum_{i=1}^d \xi_i)\sim \mathrm{Multinomial}\hspace{0.2mm}(m,(\bb{x},x_{d+1}))$ where $\sum_{i=1}^{d+1} x_i = 1$.
    In this paper, our main goal is to give general formulas for the non-central and central moments of \eqref{eq:multinomial.probability}, namely
    \begin{equation}\label{eq:main.goal}
        \EE\Big[\prod_{i=1}^d \xi_i^{p_i}\Big] \quad \text{and} \quad \EE\Big[\prod_{i=1}^d (\xi_i - \EE[\xi_i])^{p_i}\Big], \qquad p_1,p_2,\dots,p_d\in \N_0.
    \end{equation}
    We obtain the formulas using a combinatorial argument and the general expression for the factorial moments found in \cite{MR143299}, which we register in the lemma below.
    \begin{Lemma}[{\blue Factorial moments}]\label{lem:factorial.moments}
        Let $\bb{\xi}\sim \mathrm{Multinomial}\hspace{0.3mm}(m,\bb{x})$.
        Then, for all $k_1, k_2, \dots, k_d\in \N_0$,
        \begin{equation}\label{eq:factorial.moments}
            \EE\Big[\prod_{i=1}^d \xi_i^{(k_i)}\Big] = m^{(\sum_{i=1}^d k_i)} \prod_{i=1}^d x^{k_i},
        \end{equation}
        where $m^{(k)} \leqdef m (m-1) \dots (m-k+1)$ denotes the {\it $k$-th order falling factorial of $m$}.
    \end{Lemma}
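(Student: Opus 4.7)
The plan is to compute the expectation directly from \eqref{eq:multinomial.probability} and collapse the resulting sum via the multinomial theorem. Writing $K\leqdef \sum_{i=1}^d k_i$, I would begin with
\begin{equation*}
\EE\Big[\prod_{i=1}^d \xi_i^{(k_i)}\Big] = \sum_{\bb{j}\in \N_0^d\cap m\mathcal{S}} \Big(\prod_{i=1}^d j_i^{(k_i)}\Big) \frac{m!}{(m-\sum_{i=1}^d j_i)! \prod_{i=1}^d j_i!} \Big(1-\sum_{i=1}^d x_i\Big)^{m-\sum_{i=1}^d j_i} \prod_{i=1}^d x_i^{j_i},
\end{equation*}
and apply the identity $j_i^{(k_i)}/j_i! = 1/(j_i-k_i)!$, valid for $j_i\geq k_i$ and interpreted as zero otherwise. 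This annihilates every summand for which some $j_i<k_i$, and the change of variables $l_i\leqdef j_i-k_i$ then reduces the index set to $\bb{l}\in \N_0^d$ with $\sum_{i=1}^d l_i\leq m-K$.

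After pulling $\prod_{i=1}^d x_i^{k_i}$ out of the sum and writing $m! = m^{(K)}(m-K)!$ in the multinomial coefficient, the remaining summand is exactly the probability mass function of a $\mathrm{Multinomial}(m-K,\bb{x})$ variable evaluated at $\bb{l}$. Summing over its full support therefore yields $1$, as also follows from the multinomial theorem applied to $\big(\sum_{i=1}^d x_i + (1-\sum_{i=1}^d x_i)\big)^{m-K}=1$, and what remains is precisely $m^{(K)}\prod_{i=1}^d x_i^{k_i}$.

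There is no serious obstacle here; the argument is careful index bookkeeping, the only subtle point being that $j_i^{(k_i)}$ vanishes whenever $j_i<k_i$, so the shift correctly ranges over $\N_0$ without introducing spurious boundary terms. A purely combinatorial route is also available and arguably cleaner: $\xi_i^{(k_i)}$ counts the ordered $k_i$-tuples of distinct trials whose outcome is $i$, so $\prod_{i=1}^d \xi_i^{(k_i)}$ counts tuples of pairwise-disjoint ordered subtuples with prescribed sizes and outcome labels. By linearity of expectation, the expected count equals $m^{(K)}\prod_{i=1}^d x_i^{k_i}$, since there are $m^{(K)}$ such ordered configurations among the $m$ trials and each fixed configuration occurs with probability $\prod_{i=1}^d x_i^{k_i}$.
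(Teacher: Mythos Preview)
Your argument is correct. Note, however, that the paper does not actually prove Lemma~\ref{lem:factorial.moments}: it is quoted from \cite{MR143299} and stated without proof, so there is no ``paper's own proof'' to compare against. Both of your routes are standard and valid. The direct-summation approach works exactly as you describe; the only edge case worth flagging explicitly is $K>m$, where the shifted index set $\{\bb{l}\in\N_0^d:\sum_i l_i\le m-K\}$ is empty and the identity $m!=m^{(K)}(m-K)!$ is no longer literally available, but both sides of \eqref{eq:factorial.moments} vanish trivially in that regime (since $\sum_i \xi_i\le m<K$ forces some $\xi_i<k_i$, and $m^{(K)}=0$), so no harm is done. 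Your combinatorial interpretation via ordered tuples of trials is the cleaner of the two and sidesteps that bookkeeping entirely.
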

    The formulas that we develop for the expectations in \eqref{eq:main.goal} will be used to compute explicitly all the non-central moments up to order~8 and all the central moments up to order~4, which expands on the third and fourth order non-central moments that were previously calculated in \cite[Appendix A.1]{Newcomer2008phd}.
    We should also mention that explicit formulas for several lower-order (mixed) cumulants were presented in \cite{MR33996} (see also \cite[p.37]{MR1429617}), but not for the moments.

\section{Motivation}\label{sec:motivation}

    To the best of our knowledge, general formulas for the central and non-central moments of the multinomial distribution have never been derived in the literature.
    The central moments can arise naturally, for example, when studying asymptotic properties, via Taylor expansions, of statistical estimators involving the multinomial distribution.
    For a given sequence of i.i.d.\ observations $\bb{X}_1,\bb{X}_2,\dots,\bb{X}_n$, two examples of such estimators are the Bernstein estimator for the cumulative distribution function
    \begin{equation}\label{eq:cdf.Bernstein.estimator}
        F_{n,m}^{\star}(\bb{x}) \leqdef \sum_{\bb{k}\in \N_0^d \cap m \mathcal{S}} \frac{1}{n} \sum_{i=1}^n \ind_{(-\bb{\infty},\frac{\bb{k}}{m}]}(\bb{X}_i) P_{\bb{k},m}(\bb{x}), \quad \bb{x}\in \mathcal{S}, ~~m,n \in \N,
    \end{equation}
    and the Bernstein estimator for the density function (also called smoothed histogram)
    \begin{equation}\label{eq:histogram.estimator}
        \hat{f}_{n,m}(\bb{x}) \leqdef \sum_{\bb{k}\in \N_0^d \cap (m-1) \mathcal{S}} \frac{m^d}{n} \sum_{i=1}^n \ind_{(\frac{\bb{k}}{m}, \frac{\bb{k} + 1}{m}]}(\bb{X}_i) P_{\bb{k},m-1}(\bb{x}), \quad \bb{x}\in \mathcal{S}, ~~m,n \in \N,
    \end{equation}
    over the $d$-dimensional simplex.
    Some of their asymptotic properties were investigated by \cite{MR0397977,MR0638651,MR0726014,MR0791719,MR0858109,MR1437794,MR1703623,MR1712051,MR1873330,MR1881846,MR1910059,MR2068610,MR2351744,MR2395599,MR2488150,MR2662607,MR2782409,MR2960952,MR2925964,MR3174309,MR3139345,MR3412755,MR3488598,MR3740722,MR3630225,MR3983257} when $d = 1$, by \cite{MR1293514} when $d = 2$, and by \cite{arXiv:2002.07758,arXiv:2006.11756} for all $d\geq 1$, using a local limit theorem from \cite{arXiv:2001.08512} for the multinomial distribution (see also \cite{MR0478288}).
    The estimator \eqref{eq:histogram.estimator} is a discrete analogue of the Dirichlet kernel estimator introduced by \cite{doi:10.2307/2347365} and studied theoretically in \cite{MR1685301,MR1718494,MR1742101,MR1985506} when $d = 1$ (among others), and in \cite{arXiv:2002.06956} for all $d\geq 1$.

\section{Results}\label{sec:main.results}

    First, we give a general formula of the non-central moments of the multinomial distribution in \eqref{eq:multinomial.probability}.

    \begin{Theorem}[{\blue Non-central moments}]\label{thm:non.central.moments}
        Let $\bb{\xi}\sim \mathrm{Multinomial}\hspace{0.3mm}(m,\bb{x})$.
        Then, for all $p_1, p_2, \dots, p_d\in \N_0$,
        \begin{equation}
            \EE\Big[\prod_{i=1}^d \xi_i^{p_i}\Big] = \sum_{k_1=0}^{p_1} \dots \sum_{k_d=0}^{p_d} m^{(\sum_{i=1}^d k_i)} \prod_{i=1}^d \brkbinom{p_i}{k_i} x_i^{k_i},
        \end{equation}
        where $\brkbinom{p}{k}$ denotes a Stirling number of the second kind (i.e., the number of ways to partition a set of $p$ objects into $k$ non-empty subsets).
    \end{Theorem}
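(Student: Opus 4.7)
The plan is to pass from ordinary powers to falling factorials one coordinate at a time, so that Lemma~\ref{lem:factorial.moments} applies directly. The tool is the classical change-of-basis identity
\begin{equation*}
    n^p = \sum_{k=0}^p \brkbinom{p}{k}\, n^{(k)}, \qquad n,p \in \N_0,
\end{equation*}
which admits a short combinatorial proof: the $n^p$ functions $[p]\to[n]$ may be counted by first partitioning the domain $[p]$ into $k$ nonempty fibers (in $\brkbinom{p}{k}$ ways) and then choosing an ordered list of $k$ distinct elements of $[n]$ to serve as the images of these fibers (in $n^{(k)}$ ways), and summing over $k$. Since this is a polynomial identity in $n$, it can be applied pointwise to any $\N_0$-valued random variable.

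Applying the identity to each coordinate $\xi_i$ separately and distributing the product over $i$, I would write
\begin{equation*}
    \prod_{i=1}^d \xi_i^{p_i} = \prod_{i=1}^d \sum_{k_i=0}^{p_i} \brkbinom{p_i}{k_i}\, \xi_i^{(k_i)} = \sum_{k_1=0}^{p_1}\dots\sum_{k_d=0}^{p_d} \bigg(\prod_{i=1}^d \brkbinom{p_i}{k_i}\bigg) \prod_{i=1}^d \xi_i^{(k_i)}.
\end{equation*}
Taking expectations on both sides (the sums are finite, so linearity applies without any convergence subtlety) and inserting the factorial moment identity $\EE\big[\prod_{i=1}^d \xi_i^{(k_i)}\big] = m^{(\sum_i k_i)} \prod_{i=1}^d x_i^{k_i}$ from Lemma~\ref{lem:factorial.moments} yields the claimed expression term by term.

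There is really no obstacle: the whole theorem is a one-step consequence of the Stirling change of basis combined with the factorial moment formula of Mosimann. The only points worth pausing on are that the Stirling identity is valid when applied to the random variable $\xi_i$ (because it holds as a polynomial identity in $n$) and that the product structure on the right-hand side of Lemma~\ref{lem:factorial.moments} is preserved under the coordinatewise change of basis (the coefficients $\brkbinom{p_i}{k_i}$ depending only on index~$i$)---both of which are immediate from inspection.
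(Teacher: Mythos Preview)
Your proof is correct and follows exactly the same approach as the paper: apply the Stirling change-of-basis identity $n^p = \sum_{k=0}^p \brkbinom{p}{k}\, n^{(k)}$ to each $\xi_i^{p_i}$, expand by linearity, and invoke Lemma~\ref{lem:factorial.moments}. The only differences are cosmetic---you supply a combinatorial justification of the Stirling identity and a remark on finiteness of the sums, whereas the paper simply cites the identity as well known.
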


    \begin{proof}
        We have the following well-known relation between the power $p\in \N_0$ of a number $x\in \R$ and the falling factorials of $x$:
        \begin{equation}
            x^p = \sum_{k=0}^p \brkbinom{p}{k} \, x^{(k)}.
        \end{equation}
        See, e.g., \cite[p.262]{MR1397498}.
        Apply this relation to every $\xi_i^{p_i}$ and use the linearity of the expectation to get
        \begin{equation}
            \EE\Big[\prod_{i=1}^d \xi_i^{p_i}\Big] = \sum_{k_1=0}^{p_1} \dots \sum_{k_d=0}^{p_d} \brkbinom{p_1}{k_1} \dots \brkbinom{p_d}{k_d} \, \EE\Big[\prod_{i=1}^d \xi_i^{(k_i)}\Big],
        \end{equation}
        The conclusion follows from Lemma~\ref{lem:factorial.moments}.
    \end{proof}

    We can now deduce a general formula for the central moments of the multinomial distribution.

    \begin{Theorem}[{\blue Central moments}]\label{thm:central.moments}
        Let $\bb{\xi}\sim \mathrm{Multinomial}\hspace{0.3mm}(m,\bb{x})$.
        Then, for all $p_1, p_2, \dots, p_d\in \N_0$,
        \begin{equation}
            \EE\Big[\prod_{i=1}^d (\xi_i - \EE[\xi_i])^{p_i}\Big] = \sum_{\ell_1=0}^{p_1} \dots \sum_{\ell_d=0}^{p_d} \sum_{k_1=0}^{\ell_1} \dots \sum_{k_d=0}^{\ell_d} m^{(\sum_{i=1}^d k_i)} (-m)^{\sum_{i=1}^d (p_i - \ell_i)} \prod_{i=1}^d \binom{p_i}{\ell_i} \brkbinom{\ell_i}{k_i} x_i^{p_i - \ell_i + k_i},
        \end{equation}
        where $\binom{p}{\ell}$ denotes the binomial coefficient $\frac{p!}{\ell! (p - \ell)!}$.
    \end{Theorem}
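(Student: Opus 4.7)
The plan is to expand each factor $(\xi_i - \EE[\xi_i])^{p_i}$ using the binomial theorem, pull the deterministic factors out of the expectation, and then invoke Theorem~\ref{thm:non.central.moments} to evaluate the remaining mixed non-central moment of $\prod_{i=1}^d \xi_i^{\ell_i}$.

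First, specializing Lemma~\ref{lem:factorial.moments} to a single $k_i = 1$ (and the others zero) gives $\EE[\xi_i] = m x_i$. The binomial theorem therefore yields
\begin{equation}
(\xi_i - \EE[\xi_i])^{p_i} = \sum_{\ell_i=0}^{p_i} \binom{p_i}{\ell_i} \xi_i^{\ell_i} (-m x_i)^{p_i - \ell_i}.
\end{equation}
Multiplying these identities across $i=1,\dots,d$ and using linearity of expectation, I would obtain
\begin{equation}
\EE\Big[\prod_{i=1}^d (\xi_i - \EE[\xi_i])^{p_i}\Big] = \sum_{\ell_1=0}^{p_1}\dots\sum_{\ell_d=0}^{p_d} \prod_{i=1}^d \binom{p_i}{\ell_i} (-m x_i)^{p_i - \ell_i} \cdot \EE\Big[\prod_{i=1}^d \xi_i^{\ell_i}\Big].
\end{equation}

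Next, I would substitute the expression from Theorem~\ref{thm:non.central.moments} for the mixed non-central moment $\EE[\prod_{i=1}^d \xi_i^{\ell_i}]$, introducing the inner sums over $k_1,\dots,k_d$ together with the factor $m^{(\sum_{i=1}^d k_i)} \prod_{i=1}^d \brkbinom{\ell_i}{k_i} x_i^{k_i}$. It then remains to regroup the constants: the signs and powers of $m$ outside the expectation consolidate into $(-m)^{\sum_{i=1}^d (p_i - \ell_i)}$, the $x_i$-factors combine inside the product as $x_i^{p_i - \ell_i + k_i}$, and the binomial and Stirling coefficients $\binom{p_i}{\ell_i}\brkbinom{\ell_i}{k_i}$ stay as they are. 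Reading off the result gives precisely the identity asserted in the theorem.

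There is no substantive obstacle beyond careful bookkeeping of the nested sums over the $\ell_i$ and the $k_i$; all of the combinatorial content has already been absorbed into Theorem~\ref{thm:non.central.moments}, so the remainder of the argument is a direct application of the binomial theorem followed by reindexing.
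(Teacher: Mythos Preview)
Your proposal is correct and follows essentially the same approach as the paper: expand each factor via the binomial theorem using $\EE[\xi_i]=m x_i$, pull out the deterministic factors, and then apply Theorem~\ref{thm:non.central.moments} to the remaining non-central moment. The only difference is cosmetic---you spell out the regrouping of the $x_i$ powers and the $(-m)$ factors more explicitly than the paper does.
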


    \begin{proof}
        By applying the binomial formula to each factor $(\xi_i - \EE[\xi_i])^{p_i}$ and using the fact that $\EE[\xi_i] = m x_i$ for all $i\in \{1,2,\dots,d\}$, note that
        \begin{equation}
            \EE\Big[\prod_{i=1}^d (\xi_i - \EE[\xi_i])^{p_i}\Big] = \sum_{\ell_1=0}^{p_1} \dots \sum_{\ell_d=0}^{p_d} \EE\Big[\prod_{i=1}^d \xi_i^{\ell_i}\Big] \cdot \prod_{i=1}^d \binom{p_i}{\ell_i} (- m x_i)^{p_i - \ell_i}.
        \end{equation}
        The conclusion follows from Theorem~\ref{thm:non.central.moments}.
    \end{proof}

\section{Numerical implementation}

    The formulas in Theorem~\ref{thm:non.central.moments} and Theorem~\ref{thm:central.moments} can be implemented in \texttt{Mathematica} as follows:
    \begin{verbatim}
    NonCentral[m_, x_, p_, d_] :=
        Sum[FactorialPower[m, Sum[k[i], {i, 1, d}]] *
            Product[StirlingS2[p[[i]], k[i]] * x[[i]] ^ k[i], {i, 1, d}], ##] & @@
                ({k[#], 0, p[[#]]} & /@ Range[d]);
    Central[m_, x_, p_, d_] :=
        Sum[Sum[FactorialPower[m, Sum[k[i], {i, 1, d}]] *
            (-m) ^ Sum[p[[i]] - ell[i], {i, 1, d}] *
                Product[Binomial[p[[i]], ell[i]] * StirlingS2[ell[i], k[i]] *
                    x[[i]] ^ (p[[i]] - ell[i] + k[i]), {i, 1, d}], ##] & @@
                        ({k[#], 0, ell[#]} & /@ Range[d]), ##] & @@
                            ({ell[#], 0, p[[#]]} & /@ Range[d]);
    \end{verbatim}

\section{Explicit formulas}

    In \cite{Newcomer2008phd}, explicit expressions for the non-central moments of order~3 and 4 where obtained for the multinomial distribution, see also \cite{Newcomer_et_al_2008_tech_report,arXiv:2006.09059}.
    To expand on those results, we use the formula from Theorem~\ref{thm:non.central.moments} in the two subsections below to calculate (explicitly) all the non-central moments up to order~8 and all the central moments up to order~4.

    \vspace{3mm}
    Here is a table of the Stirling numbers of the second kind that we will use in our calculations:
    \small
    \begin{equation*}
        \begin{aligned}
            &\brkbinom{0}{0} = 1, \\[0.5mm]
            &\brkbinom{1}{0} = 0, ~ \brkbinom{1}{1} = 1, \\[0.5mm]
            &\brkbinom{2}{0} = 0, ~ \brkbinom{2}{1} = 1, \brkbinom{2}{2} = 1, \\[0.5mm]
            &\brkbinom{3}{0} = 0, ~ \brkbinom{3}{1} = 1, \brkbinom{3}{2} = 3, ~ \brkbinom{3}{3} = 1, \\[0.5mm]
            &\brkbinom{4}{0} = 0, ~ \brkbinom{4}{1} = 1, \brkbinom{4}{2} = 7, ~ \brkbinom{4}{3} = 6, \brkbinom{4}{4} = 1, \\[0.5mm]
            &\brkbinom{5}{0} = 0, ~ \brkbinom{5}{1} = 1, \brkbinom{5}{2} = 15, ~ \brkbinom{5}{3} = 25, \brkbinom{5}{4} = 10, ~ \brkbinom{5}{5} = 1, \\[0.5mm]
            &\brkbinom{6}{0} = 0, ~ \brkbinom{6}{1} = 1, \brkbinom{6}{2} = 31, ~ \brkbinom{6}{3} = 90, \brkbinom{6}{4} = 65, ~ \brkbinom{6}{5} = 15, \brkbinom{6}{6} = 1, \\[0.5mm]
            &\brkbinom{7}{0} = 0, ~ \brkbinom{7}{1} = 1, \brkbinom{7}{2} = 63, ~ \brkbinom{7}{3} = 301, \brkbinom{7}{4} = 350, ~ \brkbinom{7}{5} = 140, \brkbinom{7}{6} = 21, ~ \brkbinom{7}{7} = 1, \\[0.5mm]
            &\brkbinom{8}{0} = 0, ~ \brkbinom{8}{1} = 1, \brkbinom{8}{2} = 127, ~ \brkbinom{8}{3} = 966, \brkbinom{8}{4} = 1701, ~ \brkbinom{8}{5} = 1050, \brkbinom{8}{6} = 266, ~ \brkbinom{8}{7} = 28, ~ \brkbinom{8}{8} = 1.
        \end{aligned}
    \end{equation*}

    \subsection{Computation of the non-central moments up to order~8}

    By applying the general expression in Theorem~\ref{thm:non.central.moments} and by removing the Stirling numbers $\brkbinom{p_i}{k_i}$ that are equal to $0$, we get the following results directly.

    \vspace{3mm}
    \noindent
    \underline{Order $1$:} For any $j_1\in \{1,2,\dots,d\}$,
    \begin{align}
        \EE[\xi_{j_1}]
        &= x_{j_1} m.
    \end{align}

    \vspace{2mm}
    \noindent
    \underline{Order $2$:} For any distinct $j_1, j_2\in \{1,2,\dots,d\}$,
    \begin{align}
        \EE[\xi_{j_1}^2]
        &= x_{j_1} \big[m + m^{(2)} x_{j_1}\big], \\[1.1mm]
        \EE[\xi_{j_1} \xi_{j_2}]
        &= x_{j_1} x_{j_2} m^{(2)}.
    \end{align}

    \vspace{2mm}
    \noindent
    \underline{Order $3$:} For any distinct $j_1, j_2, j_3\in \{1,2,\dots,d\}$,
    \begin{align}
        \EE[\xi_{j_1}^3]
        &= x_{j_1} \big[m + 3 m^{(2)} x_{j_1} + m^{(3)} x_{j_1}^2\big], \\[1.1mm]
        \EE[\xi_{j_1}^2 \xi_{j_2}]
        &= x_{j_1} x_{j_2} \big[m^{(2)} + m^{(3)} x_{j_1}\big], \\[1.1mm]
        \EE[\xi_{j_1} \xi_{j_2} \xi_{j_3}]
        &= x_{j_1} x_{j_2} x_{j_3} m^{(3)}.
    \end{align}

    \vspace{2mm}
    \noindent
    \underline{Order $4$:} For any distinct $j_1, j_2, j_3, j_4\in \{1,2,\dots,d\}$,
    \begin{align}
        \EE[\xi_{j_1}^4]
        &= x_{j_1} \big[m + 7 m^{(2)} x_{j_1} + 6 m^{(3)} x_{j_1}^2 + m^{(4)} x_{j_1}^3\big], \\[1.1mm]
        \EE[\xi_{j_1}^3 \xi_{j_2}]
        &= x_{j_1} x_{j_2} \big[m^{(2)} + 3 m^{(3)} x_{j_1} + m^{(4)} x_{j_1}^2\big], \\[1.1mm]
        \EE[\xi_{j_1}^2 \xi_{j_2}^2]
        &= x_{j_1} x_{j_2} \big[m^{(2)} + m^{(3)} (x_{j_1} + x_{j_2}) + m^{(4)} x_{j_1} x_{j_2}\big], \\[1.1mm]
        \EE[\xi_{j_1}^2 \xi_{j_2} \xi_{j_3}]
        &= x_{j_1} x_{j_2} x_{j_3} \big[m^{(3)} + m^{(4)} x_{j_1}\big], \\[1.1mm]
        \EE[\xi_{j_1} \xi_{j_2} \xi_{j_3} \xi_{j_4}]
        &= x_{j_1} x_{j_2} x_{j_3} x_{j_4} m^{(4)}.
    \end{align}

    \vspace{2mm}
    \noindent
    \underline{Order $5$:} For any distinct $j_1, j_2, j_3, j_4, j_5\in \{1,2,\dots,d\}$,
    \begin{align}
        \EE[\xi_{j_1}^5]
        &= x_{j_1} \big[m + 15 m^{(2)} x_{j_1} + 25 m^{(3)} x_{j_1}^2 + 10 m^{(4)} x_{j_1}^3 + m^{(5)} x_{j_1}^4\big], \\[1.1mm]
        \EE[\xi_{j_1}^4 \xi_{j_2}]
        &= x_{j_1} x_{j_2} \big[m^{(2)} + 7 m^{(3)} x_{j_1} + 6 m^{(4)} x_{j_1}^2 + m^{(5)} x_{j_1}^3\big], \\[1.1mm]
        \EE[\xi_{j_1}^3 \xi_{j_2}^2]
        &= x_{j_1} x_{j_2} \big[m^{(2)} + m^{(3)} (3 x_{j_1} + x_{j_2}) + m^{(4)} (x_{j_1}^2 + 3 x_{j_1} x_{j_2}) + m^{(5)} x_{j_1}^2 x_{j_2}\big], \\[1.1mm]
        \EE[\xi_{j_1}^3 \xi_{j_2} \xi_{j_3}]
        &= x_{j_1} x_{j_2} x_{j_3} \big[m^{(3)} + 3 m^{(4)} x_{j_1} + m^{(5)} x_{j_1}^2\big], \\[1.1mm]
        \EE[\xi_{j_1}^2 \xi_{j_2}^2 \xi_{j_3}]
        &= x_{j_1} x_{j_2} x_{j_3} \big[m^{(3)} + m^{(4)} (x_{j_1} + x_{j_2}) + m^{(5)} x_{j_1} x_{j_2}\big], \\[1.1mm]
        \EE[\xi_{j_1}^2 \xi_{j_2} \xi_{j_3} \xi_{j_4}]
        &= x_{j_1} x_{j_2} x_{j_3} x_{j_4} \big[m^{(4)} + m^{(5)} x_{j_1}\big], \\[1.1mm]
        \EE[\xi_{j_1} \xi_{j_2} \xi_{j_3} \xi_{j_4} \xi_{j_5}]
        &= x_{j_1} x_{j_2} x_{j_3} x_{j_4} x_{j_5} m^{(5)}.
    \end{align}

    \vspace{2mm}
    \noindent
    \underline{Order $6$:} For any distinct $j_1, j_2, j_3, j_4, j_5, j_6\in \{1,2,\dots,d\}$,
    \begin{align}
        \EE[\xi_{j_1}^6]
        &= x_{j_1} \big[m + 31 m^{(2)} x_{j_1} + 90 m^{(3)} x_{j_1}^2 + 65 m^{(4)} x_{j_1}^3 + 15 m^{(5)} x_{j_1}^4 + m^{(6)} x_{j_1}^5\big], \\[1.1mm]
        \EE[\xi_{j_1}^5 \xi_{j_2}]
        &= x_{j_1} x_{j_2} \big[m^{(2)} + 15 m^{(3)} x_{j_1} + 25 m^{(4)} x_{j_1}^2 + 10 m^{(5)} x_{j_1}^3 + m^{(6)} x_{j_1}^4\big], \\[1.1mm]
        \EE[\xi_{j_1}^4 \xi_{j_2}^2]
        &= x_{j_1} x_{j_2} \left[\hspace{-1mm}
            \begin{array}{l}
                m^{(2)} + m^{(3)} (7 x_{j_1} + x_{j_2}) + m^{(4)} (6 x_{j_1}^2 + 7 x_{j_1} x_{j_2}) \\[0.49mm]
                + m^{(5)} (x_{j_1}^3 + 6 x_{j_1}^2 x_{j_2}) + m^{(6)} x_{j_1}^3 x_{j_2}
            \end{array}
            \hspace{-1mm}\right], \\[1.1mm]
        \EE[\xi_{j_1}^4 \xi_{j_2} \xi_{j_3}]
        &= x_{j_1} x_{j_2} x_{j_3} \big[m^{(3)} + 7 m^{(4)} x_{j_1} + 6 m^{(5)} x_{j_1}^2 + m^{(6)} x_{j_1}^3\big], \\[1.1mm]
        \EE[\xi_{j_1}^3 \xi_{j_2}^3]
        &= x_{j_1} x_{j_2} \left[\hspace{-1mm}
            \begin{array}{l}
                m^{(2)} + m^{(3)} (3 x_{j_1} + 3 x_{j_2}) + m^{(4)} (x_{j_1}^2 + 9 x_{j_1} x_{j_2} + x_{j_2}^2) \\[0.49mm]
                + m^{(5)} (3 x_{j_1}^2 x_{j_2} + 3 x_{j_1} x_{j_2}^2) + m^{(6)} x_{j_1}^2 x_{j_2}^2
            \end{array}
            \hspace{-1mm}\right], \\[1.1mm]
        \EE[\xi_{j_1}^3 \xi_{j_2}^2 \xi_{j_3}]
        &= x_{j_1} x_{j_2} x_{j_3} \big[m^{(3)} + m^{(4)} (3 x_{j_1} + x_{j_2}) + m^{(5)} (x_{j_1}^2 + 3 x_{j_1} x_{j_2}) + m^{(6)} x_{j_1}^2 x_{j_2}\big], \\[1.1mm]
        \EE[\xi_{j_1}^3 \xi_{j_2} \xi_{j_3} \xi_{j_4}]
        &= x_{j_1} x_{j_2} x_{j_3} x_{j_4} \big[m^{(4)} + 3 m^{(5)} x_{j_1} + m^{(6)} x_{j_1}^2\big], \\[1.1mm]
        \EE[\xi_{j_1}^2 \xi_{j_2}^2 \xi_{j_3}^2]
        &= x_{j_1} x_{j_2} x_{j_3} \left[\hspace{-1mm}
            \begin{array}{l}
                m^{(3)} + m^{(4)} (x_{j_1} + x_{j_2} + x_{j_3}) \\[0.49mm]
                + m^{(5)} (x_{j_1} x_{j_2} + x_{j_1} x_{j_3} + x_{j_2} x_{j_3}) + m^{(6)} x_{j_1} x_{j_2} x_{j_3}
            \end{array}
            \hspace{-1mm}\right], \\[1.1mm]
        \EE[\xi_{j_1}^2 \xi_{j_2}^2 \xi_{j_3} \xi_{j_4}]
        &= x_{j_1} x_{j_2} x_{j_3} x_{j_4} \big[m^{(4)} + m^{(5)} (x_{j_1} + x_{j_2}) + m^{(6)} x_{j_1} x_{j_2}\big], \\[1.1mm]
        \EE[\xi_{j_1}^2 \xi_{j_2} \xi_{j_3} \xi_{j_4} \xi_{j_5}]
        &= x_{j_1} x_{j_2} x_{j_3} x_{j_4} x_{j_5} \big[m^{(5)} + m^{(6)} x_{j_1}\big], \\[1.1mm]
        \EE[\xi_{j_1} \xi_{j_2} \xi_{j_3} \xi_{j_4} \xi_{j_5} \xi_{j_6}]
        &= x_{j_1} x_{j_2} x_{j_3} x_{j_4} x_{j_5} x_{j_6} m^{(6)}.
    \end{align}

    \vspace{2mm}
    \noindent
    \underline{Order $7$:} For any distinct $j_1, j_2, j_3, j_4, j_5, j_6, j_7\in \{1,2,\dots,d\}$,
    \begin{align}
        \EE[\xi_{j_1}^7]
        &= x_{j_1} \left[\hspace{-1mm}
            \begin{array}{l}
                m + 63 m^{(2)} x_{j_1} + 301 m^{(3)} x_{j_1}^2 + 350 m^{(4)} x_{j_1}^3 \\[0.49mm]
                + 140 m^{(5)} x_{j_1}^4 + 21 m^{(6)} x_{j_1}^5 + m^{(7)} x_{j_1}^6
            \end{array}
            \hspace{-1mm}\right], \\[1.1mm]
        \EE[\xi_{j_1}^6 \xi_{j_2}]
        &= x_{j_1} x_{j_2} \left[\hspace{-1mm}
            \begin{array}{l}
                m^{(2)} + 31 m^{(3)} x_{j_1} + 90 m^{(4)} x_{j_1}^2 \\[0.49mm]
                + 65 m^{(5)} x_{j_1}^3 + 15 m^{(6)} x_{j_1}^4 + m^{(7)} x_{j_1}^5
            \end{array}
            \hspace{-1mm}\right], \\[1.1mm]
        \EE[\xi_{j_1}^5 \xi_{j_2}^2]
        &= x_{j_1} x_{j_2} \left[\hspace{-1mm}
            \begin{array}{l}
                m^{(2)} + m^{(3)} (15 x_{j_1} + x_{j_2}) + m^{(4)} (25 x_{j_1}^2 + 15 x_{j_1} x_{j_2}) \\[0.49mm]
                + m^{(5)} (10 x_{j_1}^3 + 25 x_{j_1}^2 x_{j_2}) + m^{(6)} (x_{j_1}^4 + 10 x_{j_1}^3 x_{j_2}) + m^{(7)} x_{j_1}^4 x_{j_2}
            \end{array}
            \hspace{-1mm}\right], \\[1.1mm]
        \EE[\xi_{j_1}^5 \xi_{j_2} \xi_{j_3}]
        &= x_{j_1} x_{j_2} x_{j_3} \big[m^{(3)} + 15 m^{(4)} x_{j_1} + 25 m^{(5)} x_{j_1}^2 + 10 m^{(6)} x_{j_1}^3 + m^{(7)} x_{j_1}^4\big], \\[1.1mm]
        \EE[\xi_{j_1}^4 \xi_{j_2}^3]
        &= x_{j_1} x_{j_2} \left[\hspace{-1mm}
            \begin{array}{l}
                m^{(2)} + m^{(3)} (7 x_{j_1} + 3 x_{j_2}) + m^{(4)} (6 x_{j_1}^2 + 21 x_{j_1} x_{j_2} + x_{j_2}^2) \\[0.49mm]
                + m^{(5)} (x_{j_1}^3 + 18 x_{j_1}^2 x_{j_2} + 7 x_{j_1} x_{j_2}^2) \\[0.49mm]
                + m^{(6)} (3 x_{j_1}^3 x_{j_2} + 6 x_{j_1}^2 x_{j_2}^2) + m^{(7)} x_{j_1}^3 x_{j_2}^2
            \end{array}
            \hspace{-1mm}\right], \\[1.1mm]
        \EE[\xi_{j_1}^4 \xi_{j_2}^2 \xi_{j_3}]
        &= x_{j_1} x_{j_2} x_{j_3} \left[\hspace{-1mm}
            \begin{array}{l}
                m^{(3)} + m^{(4)} (7 x_{j_1} + x_{j_2}) + m^{(5)} (6 x_{j_1}^2 + 7 x_{j_1} x_{j_2}) \\[0.49mm]
                + m^{(6)} (x_{j_1}^3 + 6 x_{j_1}^2 x_{j_2}) + m^{(7)} x_{j_1}^3 x_{j_2}
            \end{array}
            \hspace{-1mm}\right], \\[1.1mm]
        \EE[\xi_{j_1}^4 \xi_{j_2} \xi_{j_3} \xi_{j_4}]
        &= x_{j_1} x_{j_2} x_{j_3} x_{j_4} \big[m^{(4)} + 7 m^{(5)} x_{j_1} + 6 m^{(6)} x_{j_1}^2 + m^{(7)} x_{j_1}^3\big], \\[1.1mm]
        \EE[\xi_{j_1}^3 \xi_{j_2}^3 \xi_{j_3}]
        &= x_{j_1} x_{j_2} x_{j_3} \left[\hspace{-1mm}
            \begin{array}{l}
                m^{(3)} + m^{(4)} (3 x_{j_1} + 3 x_{j_2}) + m^{(5)} (x_{j_1}^2 + 9 x_{j_1} x_{j_2} + x_{j_2}^2) \\[0.49mm]
                + m^{(6)} (3 x_{j_1}^2 x_{j_2} + 3 x_{j_1} x_{j_2}^2) + m^{(7)} x_{j_1}^2 x_{j_2}^2
            \end{array}
            \hspace{-1mm}\right], \\[1.1mm]
        \EE[\xi_{j_1}^3 \xi_{j_2}^2 \xi_{j_3}^2]
        &= x_{j_1} x_{j_2} x_{j_3} \left[\hspace{-1mm}
            \begin{array}{l}
                m^{(3)} + m^{(4)} (3 x_{j_1} + x_{j_2} + x_{j_3}) \\[0.49mm]
                + m^{(5)} (x_{j_1}^2 + 3 x_{j_1} x_{j_2} + 3 x_{j_1} x_{j_3} + x_{j_2} x_{j_3}) \\[0.49mm]
                + m^{(6)} (x_{j_1}^2 x_{j_2} + x_{j_1}^2 x_{j_3} + 3 x_{j_1} x_{j_2} x_{j_3}) + m^{(7)} x_{j_1}^2 x_{j_2} x_{j_3}
            \end{array}
            \hspace{-1mm}\right], \\[1.1mm]
        \EE[\xi_{j_1}^3 \xi_{j_2}^2 \xi_{j_3} \xi_{j_4}]
        &= x_{j_1} x_{j_2} x_{j_3} x_{j_4} \left[\hspace{-1mm}
            \begin{array}{l}
                m^{(4)} + m^{(5)} (3 x_{j_1} + x_{j_2}) \\[0.49mm]
                + m^{(6)} (x_{j_1}^2 + 3 x_{j_1} x_{j_2}) + m^{(7)} x_{j_1}^2 x_{j_2}
            \end{array}
            \hspace{-1mm}\right], \\[1.1mm]
        \EE[\xi_{j_1}^3 \xi_{j_2} \xi_{j_3} \xi_{j_4} \xi_{j_5}]
        &= x_{j_1} x_{j_2} x_{j_3} x_{j_4} x_{j_5} \big[m^{(5)} + 3 m^{(6)} x_{j_1} + m^{(7)} x_{j_1}^2\big], \\[1.1mm]
        \EE[\xi_{j_1}^2 \xi_{j_2}^2 \xi_{j_3}^2 \xi_{j_4}]
        &= x_{j_1} x_{j_2} x_{j_3} x_{j_4} \left[\hspace{-1mm}
            \begin{array}{l}
                m^{(4)} + m^{(5)} (x_{j_1} + x_{j_2} + x_{j_3}) \\[0.49mm]
                + m^{(6)} (x_{j_1} x_{j_2} + x_{j_1} x_{j_3} + x_{j_2} x_{j_3}) + m^{(7)} x_{j_1} x_{j_2} x_{j_3}
            \end{array}
            \hspace{-1mm}\right], \\[1.1mm]
        \EE[\xi_{j_1}^2 \xi_{j_2}^2 \xi_{j_3} \xi_{j_4} \xi_{j_5}]
        &= x_{j_1} x_{j_2} x_{j_3} x_{j_4} x_{j_5} \big[m^{(5)} + m^{(6)} (x_{j_1} + x_{j_2}) + m^{(7)} x_{j_1} x_{j_2}\big], \\[1.1mm]
        \EE[\xi_{j_1}^2 \xi_{j_2} \xi_{j_3} \xi_{j_4} \xi_{j_5} \xi_{j_6}]
        &= x_{j_1} x_{j_2} x_{j_3} x_{j_4} x_{j_5} x_{j_6} \big[m^{(6)} + m^{(7)} x_{j_1}\big], \\[1.1mm]
        \EE[\xi_{j_1} \xi_{j_2} \xi_{j_3} \xi_{j_4} \xi_{j_5} \xi_{j_6} \xi_{j_7}]
        &= x_{j_1} x_{j_2} x_{j_3} x_{j_4} x_{j_5} x_{j_6} x_{j_7} m^{(7)}.
    \end{align}

    \vspace{2mm}
    \noindent
    \underline{Order $8$:} For any distinct $j_1, j_2, j_3, j_4, j_5, j_6, j_7, j_8\in \{1,2,\dots,d\}$,
    \begin{align}
        \EE[\xi_{j_1}^8]
        &= x_{j_1} \left[\hspace{-1mm}
            \begin{array}{l}
                m + 127 m^{(2)} x_{j_1} + 966 m^{(3)} x_{j_2}^2 + 1701 m^{(4)} x_{j_1}^3 \\[0.49mm]
                + 1050 m^{(5)} x_{j_1}^4 + 266 m^{(6)} x_{j_1}^5 + 28 m^{(7)} x_{j_1}^6 + m^{(8)} x_{j_1}^7
            \end{array}
            \hspace{-1mm}\right], \\[1.1mm]
        \EE[\xi_{j_1}^7 \xi_{j_2}]
        &= x_{j_1} x_{j_2} \left[\hspace{-1mm}
            \begin{array}{l}
                m + 63 m^{(3)} x_{j_1} + 301 m^{(4)} x_{j_1}^2 + 350 m^{(5)} x_{j_1}^3 \\[0.49mm]
                + 140 m^{(6)} x_{j_1}^4 + 21 m^{(7)} x_{j_1}^5 + m^{(8)} x_{j_1}^6
            \end{array}
            \hspace{-1mm}\right], \\[1.1mm]
        \EE[\xi_{j_1}^6 \xi_{j_2}^2]
        &= x_{j_1} x_{j_2} \left[\hspace{-1mm}
            \begin{array}{l}
                m^{(2)} + m^{(3)} (31 x_{j_1} + x_{j_2}) + m^{(4)} (90 x_{j_1}^2 + 31 x_{j_1} x_{j_2}) \\[0.49mm]
                + m^{(5)} (65 x_{j_1}^3 + 90 x_{j_1}^2 x_{j_2}) + m^{(6)} (15 x_{j_1}^4 + 65 x_{j_1}^3 x_{j_2}) \\[0.49mm]
                + m^{(7)} (x_{j_1}^5 + 15 x_{j_1}^4 x_{j_2}) + m^{(8)} x_{j_1}^5 x_{j_2}
            \end{array}
            \hspace{-1mm}\right], \\[1.1mm]
        \EE[\xi_{j_1}^6 \xi_{j_2} \xi_{j_3}]
        &= x_{j_1} x_{j_2} x_{j_3} \left[\hspace{-1mm}
            \begin{array}{l}
                m^{(3)} + 31 m^{(4)} x_{j_1} + 90 m^{(5)} x_{j_1}^2 \\[0.49mm]
                + 65 m^{(6)} x_{j_1}^3 + 15 m^{(7)} x_{j_1}^4 + m^{(8)} x_{j_1}^5
            \end{array}
            \hspace{-1mm}\right], \\[1.1mm]
        \EE[\xi_{j_1}^5 \xi_{j_2}^3]
        &= x_{j_1} x_{j_2} \left[\hspace{-1mm}
            \begin{array}{l}
                m^{(2)} + m^{(3)} (15 x_{j_1} + 3 x_{j_2}) + m^{(4)} (25 x_{j_1}^2 + 45 x_{j_1} x_{j_2} + x_{j_2}^2) \\[0.49mm]
                + m^{(5)} (10 x_{j_1}^3 + 75 x_{j_1}^2 x_{j_2} + 15 x_{j_1} x_{j_2}^2) \\[0.49mm]
                + m^{(6)} (x_{j_4}^4 + 30 x_{j_1}^3 x_{j_2} + 25 x_{j_1}^2 x_{j_2}^2) \\[0.49mm]
                + m^{(7)} (3 x_{j_1}^4 x_{j_2} + 10 x_{j_1}^3 x_{j_2}^2) + m^{(8)} x_{j_1}^4 x_{j_2}^2
            \end{array}
            \hspace{-1mm}\right], \\[1.1mm]
        \EE[\xi_{j_1}^5 \xi_{j_2}^2 \xi_{j_3}]
        &= x_{j_1} x_{j_2} x_{j_3} \left[\hspace{-1mm}
            \begin{array}{l}
                m^{(3)} + m^{(4)} (15 x_{j_1} + x_{j_2}) + m^{(5)} (25 x_{j_1}^2 + 15 x_{j_1} x_{j_2}) \\[0.49mm]
                + m^{(6)} (10 x_{j_1}^3 + 25 x_{j_1}^2 x_{j_2}) + m^{(7)} (x_{j_1}^4 + 10 x_{j_1}^3 x_{j_2}) + m^{(8)} x_{j_1}^4 x_{j_2}
            \end{array}
            \hspace{-1mm}\right], \\[1.1mm]
        \EE[\xi_{j_1}^5 \xi_{j_2} \xi_{j_3} \xi_{j_4}]
        &= x_{j_1} x_{j_2} x_{j_3} x_{j_4} \big[m^{(4)} + 15 m^{(5)} x_{j_1} + 25 m^{(6)} x_{j_1}^2 + 10 m^{(7)} x_{j_1}^3 + m^{(8)} x_{j_1}^4\big], \\[1.1mm]
        \EE[\xi_{j_1}^4 \xi_{j_2}^4]
        &= x_{j_1} x_{j_2} \left[\hspace{-1mm}
            \begin{array}{l}
                m^{(2)} + m^{(3)} (7 x_{j_1} + 7 x_{j_2}) + m^{(4)} (6 x_{j_1}^2 + 49 x_{j_1} x_{j_2} + 6 x_{j_2}^2) \\[0.49mm]
                + m^{(5)} (x_{j_1}^3 + 42 x_{j_1}^2 x_{j_2} + 42 x_{j_1} x_{j_2}^2 + x_{j_2}^3) \\[0.49mm]
                + m^{(6)} (7 x_{j_1}^3 x_{j_2} + 36 x_{j_1}^2 x_{j_2}^2 + 7 x_{j_1} x_{j_2}^3) \\[0.49mm]
                + m^{(7)} (6 x_{j_1}^3 x_{j_2}^2 + 6 x_{j_1}^2 x_{j_2}^3) + m^{(8)} x_{j_1}^3 x_{j_2}^3
            \end{array}
            \hspace{-1mm}\right], \\[1.1mm]
        \EE[\xi_{j_1}^4 \xi_{j_2}^3 \xi_{j_3}]
        &= x_{j_1} x_{j_2} x_{j_3} \left[\hspace{-1mm}
            \begin{array}{l}
                m^{(3)} + m^{(4)} (7 x_{j_1} + 3 x_{j_2}) + m^{(5)} (6 x_{j_1}^2 + 21 x_{j_1} x_{j_2} + x_{j_2}^2) \\[0.49mm]
                + m^{(6)} (x_{j_1}^3 + 18 x_{j_1}^2 x_{j_2} + 7 x_{j_1} x_{j_2}^2) \\[0.49mm]
                + m^{(7)} (3 x_{j_1}^3 x_{j_2} + 6 x_{j_1}^2 x_{j_2}^2) + m^{(8)} x_{j_1}^3 x_{j_2}^2
            \end{array}
            \hspace{-1mm}\right], \\[1.1mm]
        \EE[\xi_{j_1}^4 \xi_{j_2}^2 \xi_{j_3}^2]
        &= x_{j_1} x_{j_2} x_{j_3} \left[\hspace{-1mm}
            \begin{array}{l}
                m^{(3)} + m^{(4)} (7 x_{j_1} + x_{j_2} + x_{j_3}) \\[0.49mm]
                + m^{(5)} (6 x_{j_1}^2 + 7 x_{j_1} x_{j_2} + 7 x_{j_1} x_{j_3} + x_{j_2} x_{j_3}) \\[0.49mm]
                + m^{(6)} (x_{j_1}^3 + 6 x_{j_1}^2 x_{j_2} + 6 x_{j_1}^2 x_{j_3} + 7 x_{j_1} x_{j_2} x_{j_3}) \\[0.49mm]
                + m^{(7)} (x_{j_1}^3 x_{j_2} + x_{j_1}^3 x_{j_3} + 6 x_{j_1}^2 x_{j_2} x_{j_3}) + m^{(8)} x_{j_1}^3 x_{j_2} x_{j_3}
            \end{array}
            \hspace{-1mm}\right], \\[1.1mm]
        \EE[\xi_{j_1}^4 \xi_{j_2}^2 \xi_{j_3} \xi_{j_4}]
        &= x_{j_1} x_{j_2} x_{j_3} x_{j_4} \left[\hspace{-1mm}
            \begin{array}{l}
                m^{(4)} + m^{(5)} (7 x_{j_1} + x_{j_2}) + m^{(6)} (6 x_{j_1}^2 + 7 x_{j_1} x_{j_2}) \\[0.49mm]
                + m^{(7)} (x_{j_1}^3 + 6 x_{j_1}^2 x_{j_2}) + m^{(8)} x_{j_1}^3 x_{j_2}
            \end{array}
            \hspace{-1mm}\right], \\[1.1mm]
        \EE[\xi_{j_1}^4 \xi_{j_2} \xi_{j_3} \xi_{j_4} \xi_{j_5}]
        &= x_{j_1} x_{j_2} x_{j_3} x_{j_4} x_{j_5} \big[m^{(5)} + 7 m^{(6)} x_{j_1} + 6 m^{(7)} x_{j_1}^2 + m^{(8)} x_{j_1}^3\big], \\[1.1mm]
        \EE[\xi_{j_1}^3 \xi_{j_2}^3 \xi_{j_3}^2]
        &= x_{j_1} x_{j_2} x_{j_3} \left[\hspace{-1mm}
            \begin{array}{l}
                m^{(3)} + m^{(4)} (3 x_{j_1} + 3 x_{j_2} + x_{j_3}) \\[0.49mm]
                + m^{(5)} (x_{j_1}^2 + x_{j_2}^2 + 3 x_{j_1} x_{j_3} + 3 x_{j_2} x_{j_3} + 9 x_{j_1} x_{j_2}) \\[0.49mm]
                + m^{(6)} (x_{j_1}^2 x_{j_3} + x_{j_2}^2 x_{j_3} + 3 x_{j_1}^2 x_{j_2} + 3 x_{j_1} x_{j_2}^2 + 9 x_{j_1} x_{j_2} x_{j_3}) \\[0.49mm]
                + m^{(7)} (x_{j_1}^2 x_{j_2}^2 + 3 x_{j_1}^2 x_{j_2} x_{j_3} + 3 x_{j_1} x_{j_2}^2 x_{j_3}) + m^{(8)} x_{j_1}^2 x_{j_2}^2 x_{j_3}
            \end{array}
            \hspace{-1mm}\right], \\[1.1mm]
        \EE[\xi_{j_1}^3 \xi_{j_2}^3 \xi_{j_3} \xi_{j_4}]
        &= x_{j_1} x_{j_2} x_{j_3} x_{j_4} \left[\hspace{-1mm}
            \begin{array}{l}
                m^{(4)} + m^{(5)} (3 x_{j_1} + 3 x_{j_2}) + m^{(6)} (x_{j_1}^2 + 9 x_{j_1} x_{j_2} + x_{j_2}^2) \\[0.49mm]
                + m^{(7)} (3 x_{j_1}^2 x_{j_2} + 3 x_{j_1} x_{j_2}^2) + m^{(8)} x_{j_1}^2 x_{j_2}^2
            \end{array}
            \hspace{-1mm}\right], \\[1.1mm]
        \EE[\xi_{j_1}^3 \xi_{j_2}^2 \xi_{j_3}^2 \xi_{j_4}]
        &= x_{j_1} x_{j_2} x_{j_3} x_{j_4} \left[\hspace{-1mm}
            \begin{array}{l}
                m^{(4)} + m^{(5)} (3 x_{j_1} + x_{j_2} + x_{j_3}) \\[0.49mm]
                + m^{(6)} (3 x_{j_1} x_{j_2} + 3 x_{j_1} x_{j_3} + x_{j_2} x_{j_3}) \\[0.49mm]
                + m^{(7)} (x_{j_1}^2 x_{j_2} + x_{j_1}^2 x_{j_3} + 3 x_{j_1} x_{j_2} x_{j_3}) + m^{(8)} x_{j_1}^2 x_{j_2} x_{j_3}
            \end{array}
            \hspace{-1mm}\right], \\[1.1mm]
        \EE[\xi_{j_1}^3 \xi_{j_2}^2 \xi_{j_3} \xi_{j_4} \xi_{j_5}]
        &= x_{j_1} x_{j_2} x_{j_3} x_{j_4} x_{j_5} \left[\hspace{-1mm}
            \begin{array}{l}
                m^{(5)} + m^{(6)} (3 x_{j_1} + x_{j_2}) \\[0.49mm]
                + m^{(7)} (x_{j_1}^2 + 3 x_{j_1} x_{j_2}) + m^{(8)} x_{j_1}^2 x_{j_2}
            \end{array}
            \hspace{-1mm}\right], \\[1.1mm]
        \EE[\xi_{j_1}^3 \xi_{j_2} \xi_{j_3} \xi_{j_4} \xi_{j_5} \xi_{j_6}]
        &= x_{j_1} x_{j_2} x_{j_3} x_{j_4} x_{j_5} x_{j_6} \big[m^{(6)} + 3 m^{(7)} x_{j_1} + m^{(8)} x_{j_1}^2\big], \\[1.1mm]
        \EE[\xi_{j_1}^2 \xi_{j_2}^2 \xi_{j_3}^2 \xi_{j_4}^2]
        &= x_{j_1} x_{j_2} x_{j_3} x_{j_4} \left[\hspace{-1mm}
            \begin{array}{l}
                m^{(4)} + m^{(5)} (x_{j_1} + x_{j_2} + x_{j_3} + x_{j_4}) \\[0.49mm]
                + m^{(6)} (x_{j_1} x_{j_2} + x_{j_1} x_{j_3} + x_{j_1} x_{j_4} + x_{j_2} x_{j_3} + x_{j_2} x_{j_4} + x_{j_3} x_{j_4}) \\[0.49mm]
                + m^{(7)} (x_{j_1} x_{j_2} x_{j_3} + x_{j_1} x_{j_2} x_{j_4} + x_{j_1} x_{j_3} x_{j_4} + x_{j_2} x_{j_3} x_{j_4}) \\[0.49mm]
                + m^{(8)} x_{j_1} x_{j_2} x_{j_3} x_{j_4}
            \end{array}
            \hspace{-1mm}\right], \\[-3.5mm]
        \EE[\xi_{j_1}^2 \xi_{j_2}^2 \xi_{j_3}^2 \xi_{j_4} \xi_{j_5}]
        &= x_{j_1} x_{j_2} x_{j_3} x_{j_4} x_{j_5} \left[\hspace{-1mm}
            \begin{array}{l}
                m^{(5)} + m^{(6)} (x_{j_1} + x_{j_2} + x_{j_3}) \\[0.49mm]
                + m^{(7)} (x_{j_1} x_{j_2} + x_{j_1} x_{j_3} + x_{j_2} x_{j_3}) + m^{(8)} x_{j_1} x_{j_2} x_{j_3}
            \end{array}
            \hspace{-1mm}\right], \\[1.1mm]
        \EE[\xi_{j_1}^2 \xi_{j_2}^2 \xi_{j_3} \xi_{j_4} \xi_{j_5} \xi_{j_6}]
        &= x_{j_1} x_{j_2} x_{j_3} x_{j_4} x_{j_5} x_{j_6} \big[m^{(6)} + m^{(7)} (x_{j_1} + x_{j_2}) + m^{(8)} x_{j_1} x_{j_2}\big], \\[1.1mm]
        \EE[\xi_{j_1}^2 \xi_{j_2} \xi_{j_3} \xi_{j_4} \xi_{j_5} \xi_{j_6} \xi_{j_7}]
        &= x_{j_1} x_{j_2} x_{j_3} x_{j_4} x_{j_5} x_{j_6} x_{j_7} \big[m^{(7)} + m^{(8)} x_{j_1}\big], \\[1.1mm]
        \EE[\xi_{j_1} \xi_{j_2} \xi_{j_3} \xi_{j_4} \xi_{j_5} \xi_{j_6} \xi_{j_7} \xi_{j_8}]
        &= x_{j_1} x_{j_2} x_{j_3} x_{j_4} x_{j_5} x_{j_6} x_{j_7} x_{j_8} m^{(8)}.
    \end{align}

    \subsection{Computation of the central moments up to order~4}

    With the results of the previous subsection and some algebraic manipulations (or the formula in Theorem~\ref{thm:central.moments}), we can now calculate the central moments explicitly.
    We could calculate them up to order~8, but it would be very tedious.
    Instead, we write them up to order~4 for the sake of brevity.
    The simplifications we make to obtain the boxed expressions below are done with \texttt{Mathematica}.

    \vspace{3mm}
    \noindent
    \underline{Order $2$:} For any distinct $j_1, j_2\in \{1,2,\dots,d\}$,
    \begin{align}
        \EE[(\xi_{j_1} - \EE[\xi_{j_1}])^2]
        &= \EE[\xi_{j_1}^2] - (\EE[\xi_{j_1}])^2
        = x_{j_1} \big[m + m^{(2)} x_{j_1}\big] - m^2 x_{j_1}^2 \notag \\[1.1mm]
        &= \boxed{m x_{j_1} (1 - x_{j_1})} \\[3.1mm]
        \EE[(\xi_{j_1} - \EE[\xi_{j_1}]) (\xi_{j_2} - \EE[\xi_{j_2}])]
        &= \EE[\xi_{j_1} \xi_{j_2}] - \EE[\xi_{j_1}] \EE[\xi_{j_2}]
        = m^{(2)} x_{j_1} x_{j_2} - m x_{j_1} m x_{j_2} \notag \\[1.1mm]
        &= \boxed{- m x_{j_1} x_{j_2}} \, .
    \end{align}

    \vspace{2mm}
    \noindent
    \underline{Order $3$:} For any distinct $j_1, j_2, j_3\in \{1,2,\dots,d\}$,
    \begin{align}
        \EE[(\xi_{j_1} - \EE[\xi_{j_1}])^3]
        &= \EE[\xi_{j_1}^3] - 3 \, \EE[\xi_{j_1}^2] \EE[\xi_{j_1}] + 2 \, (\EE[\xi_{j_1}])^3 \notag \\[1.1mm]
        &= x_{j_1} \big[m + 3 m^{(2)} x_{j_1} + m^{(3)} x_{j_1}^2\big] - 3 x_{j_1} \big[m + m^{(2)} x_{j_1}\big] m x_{j_1} + 2 m^3 x_{j_1}^3 \notag \\[1.1mm]
        &= \boxed{m x_{j_1} (x_{j_1} - 1) (2 x_{j_1} - 1)} \\[3.1mm]
        &\hspace{-23.6mm}\EE[(\xi_{j_1} - \EE[\xi_{j_1}])^2 (\xi_{j_2} - \EE[\xi_{j_2}])] \notag \\[1.1mm]
        &= \EE[\xi_{j_1}^2 \xi_{j_2}] - \EE[\xi_{j_1}^2] \EE[\xi_{j_2}] - 2 \, \EE[\xi_{j_1} \xi_{j_2}] \EE[\xi_{j_1}] + 2 \, (\EE[\xi_{j_1}])^2 \EE[\xi_{j_2}] \notag \\[1.1mm]
        &= x_{j_1} x_{j_2} \big[m^{(2)} + m^{(3)} x_{j_1}\big] - x_{j_1} \big[m + m^{(2)} x_{j_1}\big] m x_{j_2} - 2 m^{(2)} x_{j_1} x_{j_2} m x_{j_1} + 2 m^2 x_{j_1}^2 m x_{j_2} \notag \\[1.1mm]
        &= \boxed{m x_{j_1} x_{j_2} (2 x_{j_1} - 1)} \\[3.1mm]
        &\hspace{-23.6mm}\EE[(\xi_{j_1} - \EE[\xi_{j_1}]) (\xi_{j_2} - \EE[\xi_{j_2}]) (\xi_{j_3} - \EE[\xi_{j_3}])] \notag \\[1.1mm]
        &= \EE[\xi_{j_1} \xi_{j_2} \xi_{j_3}] - \EE[\xi_{j_1} \xi_{j_2}] \EE[\xi_{j_3}] - \EE[\xi_{j_1} \xi_{j_3}] \EE[\xi_{j_2}] - \EE[\xi_{j_2} \xi_{j_3}] \EE[\xi_{j_1}] + 2 \, \EE[\xi_{j_1}] \EE[\xi_{j_2}] \EE[\xi_{j_3}] \notag \\[1.1mm]
        &= m^{(3)} x_{j_1} x_{j_2} x_{j_3} - m^{(2)} x_{j_1} x_{j_2} m x_{j_3} - m^{(2)} x_{j_1} x_{j_3} m x_{j_2} - m^{(2)} x_{j_2} x_{j_3} m x_{j_1} + 2 m^3 x_{j_1} x_{j_2} x_{j_3} \notag \\[1.1mm]
        &= \boxed{2 m x_{j_1} x_{j_2} x_{j_3}} \, .
    \end{align}

    \vspace{2mm}
    \noindent
    \underline{Order $4$:} For any distinct $j_1, j_2, j_3, j_4\in \{1,2,\dots,d\}$,
    \begin{align}
        &\EE[(\xi_{j_1} - \EE[\xi_{j_1}])^4] \notag \\[1.1mm]
        &\qquad= \EE[\xi_{j_1}^4] - 4 \, \EE[\xi_{j_1}^3] \EE[\xi_{j_1}] + 6 \, \EE[\xi_{j_1}^2] (\EE[\xi_{j_1}])^2 - 3 \, (\EE[\xi_{j_1}])^4 \notag \\[1.1mm]
        &\qquad= x_{j_1} \big[m + 7 m^{(2)} x_{j_1} + 6 m^{(3)} x_{j_1}^2 + m^{(4)} x_{j_1}^3\big] - 4 x_{j_1} \big[m + 3 m^{(2)} x_{j_1} + m^{(3)} x_{j_1}^2\big] m x_{j_1} \notag \\
        &\qquad\quad+ 6 x_{j_1} \big[m + m^{(2)} x_{j_1}\big] (m x_{j_1})^2 - 3 m^4 x_{j_1}^4 \notag \\
        &\qquad= \boxed{3 m^2 x_{j_1}^2 (x_{j_1} - 1)^2 + m x_{j_1} (1 - x_{j_1}) (6 x_{j_1}^2 - 6 x_{j_1} + 1)} \\[2.1mm]
        &\EE[(\xi_{j_1} - \EE[\xi_{j_1}])^3 (\xi_{j_2} - \EE[\xi_{j_2}])] \notag \\[1.1mm]
        &\qquad= \EE[\xi_{j_1}^3 \xi_{j_2}] - \EE[\xi_{j_1}^3] \EE[\xi_{j_2}] - 3 \, \EE[\xi_{j_1}^2 \xi_{j_2}] \EE[\xi_{j_1}] + 3 \, \EE[\xi_{j_1}^2] \EE[\xi_{j_1}] \EE[\xi_{j_2}] \notag \\[1.1mm]
        &\qquad\quad+ 3 \, \EE[\xi_{j_1} \xi_{j_2}] (\EE[\xi_{j_1}])^2 - 3 \, (\EE[\xi_{j_1}])^3 \EE[\xi_{j_2}] \notag \\[1.1mm]
        &\qquad= x_{j_1} x_{j_2} \big[m^{(2)} + 3 m^{(3)} x_{j_1} + m^{(4)} x_{j_1}^2\big] - x_{j_1} \big[m + 3 m^{(2)} x_{j_1} + m^{(3)} x_{j_1}^2\big] m x_{j_2} \notag \\
        &\qquad\quad- 3 x_{j_1} x_{j_2} \big[m^{(2)} + m^{(3)} x_{j_1}\big] m x_{j_1} + 3 x_{j_1} \big[m + m^{(2)} x_{j_1}\big] m x_{j_1} m x_{j_2} + 3 m^{(2)} x_{j_1} x_{j_2} m^2 x_{j_1}^2 - 3 m^3 x_{j_1}^3 m x_{j_2} \notag \\
        &\qquad= \boxed{m x_{j_1} x_{j_2} (3 (m - 2) x_{j_1} (x_{j_1} - 1) - 1)} \\[2.1mm]
        &\EE[(\xi_{j_1} - \EE[\xi_{j_1}])^2 (\xi_{j_2} - \EE[\xi_{j_2}])^2] \notag \\[1.1mm]
        &\qquad= \EE[\xi_{j_1}^2 \xi_{j_2}^2] - 2 \, \EE[\xi_{j_1}^2 \xi_{j_2}] \EE[\xi_{j_2}] - 2 \, \EE[\xi_{j_1} \xi_{j_2}^2] \EE[\xi_{j_1}] + \EE[\xi_{j_1}^2] (\EE[\xi_{j_2}])^2 + \EE[\xi_{j_2}^2] (\EE[\xi_{j_1}])^2 \notag \\[1.1mm]
        &\qquad\quad+ 4 \, \EE[\xi_{j_1} \xi_{j_2}] \EE[\xi_{j_1}] \EE[\xi_{j_2}] - 3 \, (\EE[\xi_{j_1}])^2 (\EE[\xi_{j_2}])^2 \notag \\[1.1mm]
        &\qquad= x_{j_1} x_{j_2} \big[m^{(2)} + m^{(3)} (x_{j_1} + x_{j_2}) + m^{(4)} x_{j_1} x_{j_2}\big] - 2 x_{j_1} x_{j_2} \big[m^{(2)} + m^{(3)} x_{j_1}\big] m x_{j_2} \notag \\
        &\qquad\quad- 2 x_{j_1} x_{j_2} \big[m^{(2)} + m^{(3)} x_{j_2}\big] m x_{j_1} + x_{j_1} \big[m + m^{(2)} x_{j_1}\big] m^2 x_{j_2}^2 + x_{j_2} \big[m + m^{(2)} x_{j_2}\big] m^2 x_{j_1}^2 \notag \\
        &\qquad\quad+ 4 m^{(2)} x_{j_1} x_{j_2} m x_{j_1} m x_{j_2} - 3 \, m^2 x_{j_1}^2 m^2 x_{j_2}^2 \notag \\
        &\qquad= \boxed{m (m - 2) x_{j_1} x_{j_2} (3 x_{j_1} x_{j_2} - (x_{j_1} + x_{j_2}) + 1) + m x_{j_1} x_{j_2}} \\[2.1mm]
        &\EE[(\xi_{j_1} - \EE[\xi_{j_1}])^2 (\xi_{j_2} - \EE[\xi_{j_2}]) (\xi_{j_3} - \EE[\xi_{j_3}])] \notag \\[1.1mm]
        &\qquad= \EE[\xi_{j_1}^2 \xi_{j_2} \xi_{j_3}] - \EE[\xi_{j_1}^2 \xi_{j_2}] \EE[\xi_{j_3}] - \EE[\xi_{j_1}^2 \xi_{j_3}] \EE[\xi_{j_2}] - 2 \, \EE[\xi_{j_1} \xi_{j_2} \xi_{j_3}] \EE[\xi_{j_1}] + \EE[\xi_{j_1}^2] \EE[\xi_{j_2}] \EE[\xi_{j_3}] \notag \\[1.1mm]
        &\qquad\quad+ 2 \, \EE[\xi_{j_1} \xi_{j_2}] \EE[\xi_{j_1}] \EE[\xi_{j_3}] + 2 \, \EE[\xi_{j_1} \xi_{j_3}] \EE[\xi_{j_1}] \EE[\xi_{j_2}] + \EE[\xi_{j_2} \xi_{j_3}] (\EE[\xi_{j_1}])^2 - 3 \, (\EE[\xi_{j_1}])^2 \EE[\xi_{j_2}] \EE[\xi_{j_3}] \notag \\[1.1mm]
        &\qquad= x_{j_1} x_{j_2} x_{j_3} \big[m^{(3)} + m^{(4)} x_{j_1}\big] - x_{j_1} x_{j_2} \big[m^{(2)} + m^{(3)} x_{j_1}] m x_{j_3} - x_{j_1} x_{j_3} \big[m^{(2)} + m^{(3)} x_{j_1}] m x_{j_2} \notag \\
        &\qquad\quad- 2 m^{(3)} x_{j_1} x_{j_2} x_{j_3} m x_{j_1} + x_{j_1} \big[m + m^{(2)} x_{j_1}\big] m x_{j_2} m x_{j_3} + 2 m^{(2)} x_{j_1} x_{j_2} m x_{j_1} m x_{j_3}  \notag \\
        &\qquad\quad+ 2 m^{(2)} x_{j_1} x_{j_3} m x_{j_1} m x_{j_2} + m^{(2)} x_{j_2} x_{j_3} m^2 x_{j_1}^2 - 3 m^2 x_{j_1}^2 m x_{j_2} m x_{j_3} \notag \\
        &\qquad= \boxed{m (m - 2) x_{j_1} x_{j_2} x_{j_3} (3 x_{j_1} - 1)} \\[2.1mm]
        &\EE[(\xi_{j_1} - \EE[\xi_{j_1}]) (\xi_{j_2} - \EE[\xi_{j_2}]) (\xi_{j_3} - \EE[\xi_{j_3}]) (\xi_{j_4} - \EE[\xi_{j_4}])] \notag \\[1.1mm]
        &\qquad= \EE[\xi_{j_1} \xi_{j_2} \xi_{j_3} \xi_{j_4}] - \EE[\xi_{j_1} \xi_{j_2} \xi_{j_3}] \EE[\xi_{j_4}] - \EE[\xi_{j_1} \xi_{j_2} \xi_{j_4}] \EE[\xi_{j_3}] - \EE[\xi_{j_1} \xi_{j_3} \xi_{j_4}] \EE[\xi_{j_2}] - \EE[\xi_{j_2} \xi_{j_3} \xi_{j_4}] \EE[\xi_{j_1}] \notag \\[1.1mm]
        &\qquad\quad+ \EE[\xi_{j_1} \xi_{j_2}] \EE[\xi_{j_3}] \EE[\xi_{j_4}] + \EE[\xi_{j_1} \xi_{j_3}] \EE[\xi_{j_2}] \EE[\xi_{j_4}] + \EE[\xi_{j_1} \xi_{j_4}] \EE[\xi_{j_2}] \EE[\xi_{j_3}] \notag \\[1.1mm]
        &\qquad\quad+ \EE[\xi_{j_2} \xi_{j_3}] \EE[\xi_{j_1}] \EE[\xi_{j_4}] + \EE[\xi_{j_2} \xi_{j_4}] \EE[\xi_{j_1}] \EE[\xi_{j_3}] + \EE[\xi_{j_3} \xi_{j_4}] \EE[\xi_{j_1}] \EE[\xi_{j_2}] - 3\, \EE[\xi_{j_1}] \EE[\xi_{j_2}] \EE[\xi_{j_3}] \EE[\xi_{j_4}] \notag \\[1.1mm]
        &\qquad= m^{(4)} x_{j_1} x_{j_2} x_{j_3} x_{j_4} - m^{(3)} x_{j_1} x_{j_2} x_{j_3} m x_{j_4} - m^{(3)} x_{j_1} x_{j_2} x_{j_4} m x_{j_3} - m^{(3)} x_{j_1} x_{j_3} x_{j_4} m x_{j_2} - m^{(3)} x_{j_2} x_{j_3} x_{j_4} m x_{j_1} \notag \\
        &\qquad\quad+ m^{(2)} x_{j_1} x_{j_2} m x_{j_3} m x_{j_4} + m^{(2)} x_{j_1} x_{j_3} m x_{j_2} m x_{j_4} + m^{(2)} x_{j_1} x_{j_4} m x_{j_2} m x_{j_3} \notag \\
        &\qquad\quad+ m^{(2)} x_{j_2} x_{j_3} m x_{j_1} m x_{j_4} + m^{(2)} x_{j_2} x_{j_4} m x_{j_1} m x_{j_3} + m^{(2)} x_{j_3} x_{j_4} m x_{j_1} m x_{j_2} - 3\, m^4 x_{j_1} x_{j_2} x_{j_3} x_{j_4} \notag \\
        &\qquad= \boxed{3 m (m - 2) x_{j_1} x_{j_2} x_{j_3} x_{j_4}}\, .
    \end{align}

\section{Conclusion}

    In this short paper, we found general formulas for the central and non-central moments of the multinomial distribution as well as explicit formulas for all the non-central moments up to order~8 and all the central moments up to order~4.
    Our work expands on the results in \cite{Newcomer2008phd}, where the central moments were calculated up to order~4.
    It also complements the general formula for the (joint) factorial moments from \cite{MR143299} and the explicit formulas for some of the lower-order (mixed) cumulants that were presented in \cite{MR33996}.

%
%

\nocite{MR3825458}
\nocite{Ouimet2019phd}

\bibliographystyle{authordate1}
\bibliography{Ouimet_2020_moments_multinomial_arxiv_bib}

\end{document}